\newcommand*\tp[1]{\big( \begin{smallmatrix}#1\end{smallmatrix} \big)}
\newcommand*\ab[1]{\left\langle #1 \right\rangle}
\newcommand*\Mod[1]{ \; (\textup{mod} \; #1 )}
\newcommand*\tops[2]{\texorpdfstring{#1}{#2}}
\newcommand*\ol[1]{\overline{#1}}
\newcommand*{\Z}{\mathbb{Z}}
\newcommand*{\N}{\mathbb{N}}
\newcommand*{\R}{\mathbb{R}}
\newcommand*{\Q}{\mathbb{Q}}
\newcommand*\Cay{\textup{Cay}}
\renewcommand{\phi}{\varphi}
\newcommand*\Sec[1]{*{#1} \phantomsection \addcontentsline{toc}{section}{#1}}
\newtheorem{Thm}{Theorem}[section]
\theoremstyle{definition}
\theoremstyle{remark}
\newtheorem{Rmk}[Thm]{Remark}
\providecommand{\keywords}[1]
{
  \small	
  \textbf{\textit{Keywords---}} #1
}
\title{An alternate proof of Payan’s theorem on cubelike graphs}
\author[a]{Jonathan Cervantes}
\author[b]{Mike Krebs}
\affil[a]{University of California, Riverside, Dept. of Mathematics, Skye Hall, 900 University Ave., Riverside, CA 92521, jcerv092@ucr.edu}
\affil[b]{California State University, Los Angeles, Dept. of Mathematics, 5151 State University Drive, Los Angeles, CA 91711, mkrebs@calstatela.edu}
\date{\today}
\begin{document}

\maketitle

\keywords{graph, chromatic number, abelian group, Cayley graph, cube-like graph, Payan's theorem}

\begin{abstract}

A cubelike graph is a Cayley graph on the product $\mathbb{Z}_2\times\cdots\times\mathbb{Z}_2$ of the integers modulo $2$ with itself finitely many times.  In 1992, Payan proved that no cubelike graph can have chromatic number $3$.  The authors of the present paper previously developed a general matrix method for studying chromatic numbers of Cayley graphs on abelian groups.  In this note, we apply this method of Heuberger matrices to give an alternate proof of Payan's theorem.
\end{abstract}

\section{Introduction}


Given a finite set $A$, we take $\mathcal{P}(A)$ to be the power set of $A$.  We have that $\mathcal{P}(A)$ is an abelian group under the operation of symmetric difference, that is, $X\triangle Y=(X\setminus Y)\cup (Y\setminus X)$.  A {\it cubelike graph} is a Cayley graph whose underlying group is $\mathcal{P}(A)$.  Equivalently, writing $A=\{x_1,\dots,x_n\}$ and identifying the set $X\subset A$ with the $n$-tuple whose $i$th component is $1$ if $x_i\in X$ and is $0$ otherwise, a cubelike graph can be regarded as a Cayley graph whose underlying group is an $n$-fold product $\mathbb{Z}_2^n=\mathbb{Z}_2\times\cdots\times\mathbb{Z}_2$, where $\mathbb{Z}_2$ is the group of the integers modulo $2$ under addition.

Chromatic numbers of cubelike graphs have been studied by many authors.  One notable result is due to Payan \cite{Payan}, who proved that the chromatic number of a nonbipartite cubelike graph is always at least $4$.  That is, the chromatic number of a cubelike graph cannot equal $3$.  Publications with other results on chromatic numbers of cube-like graphs include \cite{Kokkala-et-al} and \cite[Section 9.7]{Jensen}.

Payan's proof is rather clever.  It is, however, somewhat {\it ad hoc}.  The purpose of the present note is to furnish an alternate proof of Payan's theorem, one that may lend itself naturally to generalizations.

Indeed, in \cite{Cervantes-Krebs-general}, the authors put forward a general method for approaching the problem of finding the chromatic number of a Cayley graph on an abelian group.  We show that Payan's theorem falls out quite naturally as a byproduct of this ``method of Heuberger matrices.''

The other key ingredient in our proof is a special case of Payan's theorem due to Sokolov\'{a} \cite{Sokolova}, who computed that even-dimensional cubes-with-diagonals (defined below) have chromatic number $4$.  The key idea of our proof of Payan's theorem is that if a cubelike graph is nonbipartite, then there is a graph homomorphism to it from an even-dimensional cube-with-diagonals.  The Heuberger matrices make transparent the existence of this homomorphism.

This note depends heavily on \cite{Cervantes-Krebs-general}, which we will refer to frequently.  The reader should assume that all notation, terminology, and theorems used but not explained here are explained there.

\section{Payan's theorem}\label{section-payans-theorem}

In this section we prove the following theorem.

\begin{Thm}[\cite{Payan}]\label{theorem-Payan}A cube-like graph cannot have chromatic number 3.\end{Thm}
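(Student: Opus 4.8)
The plan is to show that every nonbipartite cubelike graph receives a graph homomorphism from some even-dimensional cube-with-diagonals, and then invoke Sokolov\'{a}'s result that such cubes-with-diagonals have chromatic number $4$. Since chromatic number cannot increase under a graph homomorphism, this forces $\chi \geq 4$ for any nonbipartite cubelike graph, which is precisely Payan's theorem (a bipartite graph has chromatic number at most $2$, so the only remaining case to rule out is $\chi = 3$). The conceptual engine is the method of Heuberger matrices from \cite{Cervantes-Krebs-general}: I would begin by writing the given cubelike graph as $\Cay(\Z_2^n, S)$ and encoding the connection set $S$ as the columns of a matrix over $\Z_2$, a Heuberger matrix $M$ for the graph.

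\medskip

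The key structural fact I would extract is what nonbipartiteness says about this matrix. A Cayley graph on an abelian group is nonbipartite precisely when its connection set is not contained in a coset-avoiding (index-two) subgroup; in the $\Z_2^n$ setting this should translate, via the homomorphism-detection machinery of \cite{Cervantes-Krebs-general}, into the existence of an odd closed walk, equivalently a dependency among an odd number of connection-set generators summing to $0$. The plan is to use this to produce an explicit map from the vertex set of a cube-with-diagonals $Q_{2k}$ (for suitable even $2k$) into $\Z_2^n$ that sends edges to edges, i.e.\ sends each cube-edge and each main-diagonal to an element of $S$. Concretely, I would assign to each of the $2k$ coordinate directions and to the diagonal a connection-set element so that the defining relations of the cube-with-diagonals are respected; the Heuberger matrix of $Q_{2k}$ then factors through the Heuberger matrix of the target graph, and this factorization is exactly a graph homomorphism.

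\medskip

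The role of the Heuberger matrices here is to make the existence of this homomorphism \emph{transparent}: rather than constructing a vertex map by hand and checking edge-preservation case by case, one exhibits a matrix over $\Z_2$ intertwining the two presentations, and the general theory certifies that such a matrix induces a homomorphism of the corresponding Cayley graphs. So the technical heart is a linear-algebraic construction: given the odd dependency witnessing nonbipartiteness, build the intertwining matrix and verify that the source is genuinely an even-dimensional cube-with-diagonals (in particular that the dimension comes out even, which is where the parity coming from the \emph{odd} cycle must be converted correctly).

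\medskip

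I expect the main obstacle to be precisely this parity bookkeeping — arranging that the cube-with-diagonals supplying the homomorphism has \emph{even} dimension, since Sokolov\'{a}'s theorem is available only in that case. The odd closed walk furnished by nonbipartiteness naturally suggests an odd-dimensional object, so the crux of the argument will be to repackage that odd cycle (perhaps by doubling, or by folding in a diagonal coordinate) into an even-dimensional cube-with-diagonals while preserving the homomorphism into the original graph. Getting the Heuberger-matrix formalism to deliver this even-dimensional source cleanly, rather than fighting an off-by-one in the dimension, is where I anticipate the real work lies; the remainder is a formal appeal to monotonicity of $\chi$ under homomorphisms together with Sokolov\'{a}'s computation.
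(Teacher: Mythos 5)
Your plan is essentially the paper's own proof: encode the connection set $S$ of a nonbipartite $X=\Cay(\Z_2^n,S)$ in a Heuberger matrix of the form $(A\;\vert\;2I_m)$, use the bipartiteness criterion to extract a column with an odd number $z$ of nonzero entries (equivalently, an odd-size subset of $S$ summing to zero), realize this relation as a graph homomorphism from a cube-with-diagonals into $X$, and conclude via Sokolov\'{a}'s theorem together with monotonicity of $\chi$ under homomorphisms. The parity obstacle you flag as the main difficulty in fact dissolves automatically: an odd relation among $z$ generators of order $2$ yields precisely $Q_{z-1}^d$ (one generator becomes the diagonal $w_{z-1}$ of the cube on the other $z-1$ generators), so the dimension $z-1$ is even exactly because $z$ is odd, and the only residual case is $z=1$, where $X$ has loops and is not properly colorable at all.
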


Throughout this section we take cube-like graph to be a Cayley graph on $\mathbb{Z}_2^n$.

A special case of Theorem \ref{theorem-Payan} had previously been proven by Sokolov\'{a} in \cite{Sokolova}.  We will derive Payan's theorem from Sokolov\'{a}'s theorem, and for that reason we begin by discussing the latter.

For a positive integer $n$, the \emph{$n$-dimensional cube-with-diagonals graph} $Q^d_n$ is defined by \[Q_n^d=\text{Cay}(\mathbb{Z}_2^n,\{e_1,\dots,e_n,w_n\}),\] where $e_j$ is the $n$-tuple in $\mathbb{Z}_2^n$ with $1$ in the $j$th entry and $0$ everywhere else, and $w_n$ is the $n$-tuple in $\mathbb{Z}_2^n$ with $1$ in every entry.  We can visualize $Q^d_n$ as a hypercube with edges (called ``diagonals,'' hence the name and the superscript `$d$') added to join each pair of antipodal vertices.  Sokolov\'{a} proved that for $n$ even, $Q^d_n$ has chromatic number $4$.  We present here a condensed version of the proof in \cite{Sokolova} of this result.

\begin{Thm}[\cite{Sokolova}]\label{theorem-Sokolova}If $n$ is even, then $\chi(Q_n^d)=4$.\end{Thm}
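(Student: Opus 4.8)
The plan is to prove the two inequalities separately, with the lower bound carrying essentially all the weight. For the upper bound $\chi(Q_n^d)\le 4$, I would just exhibit a proper coloring with color set $\Z_2\times\Z_2$. Define $c(x)=(|x|\bmod 2,\ x_1)$, where $|x|$ denotes the Hamming weight of $x$. Moving along an edge $e_j$ flips the first coordinate, since the weight changes by one; moving along the diagonal $w_n$ leaves the parity of the weight fixed because $n$ is even, but it flips $x_1$. In every case the color changes, so $c$ is proper. This is where the hypothesis that $n$ is even first enters.

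For the lower bound $\chi(Q_n^d)\ge 4$, suppose toward a contradiction that there is a proper $3$-coloring, regarded as a map $c\colon\Z_2^n\to\Z_3$ with $c(x)\ne c(y)$ on edges. First I would integrate the coordinate differences into an integer height function. For each $i$ set $\delta_i(x)=c(x+e_i)-c(x)$, lifted to $\{\pm1\}\subset\Z$. Around a square face $x,\ x+e_i,\ x+e_i+e_j,\ x+e_j$ the signed sum of the four lifted differences is an even integer lying in $[-4,4]$ that is $\equiv 0\pmod 3$, hence is exactly $0$. Since the $4$-cycles generate the cycle space of the hypercube $Q_n$, the cochain $(\delta_i)$ is then exact over $\Z$: there is a function $h\colon\Z_2^n\to\Z$ with $h(x+e_i)-h(x)=\delta_i(x)$ for all $x$ and $i$, and with $h\equiv c\pmod 3$.

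The contradiction then comes from the diagonal. Put $\Phi(x)=h(x+w_n)-h(x)$. Because $x$ and $x+w_n$ differ in all $n$ coordinates, $\Phi(x)$ is a sum of $n$ terms $\pm1$, so $\Phi$ is even-valued as $n$ is even; moreover $\Phi(x+e_i)-\Phi(x)=\delta_i(x+w_n)-\delta_i(x)\in\{-2,0,2\}$, so $\Phi$ changes by at most $2$ along each coordinate edge, and $\Phi(x+w_n)=-\Phi(x)$. If $\Phi$ vanished nowhere, then along any coordinate path from a vertex $x_0$ to its antipode $x_0+w_n$ the even-valued function $\Phi$ would have to pass from $v$ to $-v$ in steps of size $2$ without ever hitting $0$, which is impossible. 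Hence $\Phi(x^\ast)=0$ for some $x^\ast$, that is $c(x^\ast+w_n)=c(x^\ast)$; but $\{x^\ast,x^\ast+w_n\}$ is a diagonal edge, contradicting properness.

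The main obstacle, and the one genuinely clever point, is the passage from the coloring to the integer height function: one must notice that the mod-$3$ relation on each face is rigid enough to force the lifted differences to sum to $0$ over $\Z$, so that a global integer potential $h$ exists. Once $h$ is in hand, the parity of $\Phi$ (which again uses that $n$ is even) together with its bounded increments reduces the whole matter to a one-line discrete intermediate value argument. I would take special care to verify the two places where evenness of $n$ is invoked, since these are precisely the steps that must break down in the odd case.
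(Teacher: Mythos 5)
Your proof is correct, but its core is genuinely different from the paper's. The upper bound is the same in substance: your coloring $x \mapsto (|x| \bmod 2,\, x_1)$ is a group homomorphism $\Z_2^n \to \Z_2^2$ carrying the connection set onto the three nonzero elements, i.e., a graph homomorphism $Q_n^d \to K_4$, which is exactly how the paper gets $\chi(Q_n^d)\leq 4$. The lower bound diverges: the paper follows Sokolov\'{a}'s induction on even $n$, extracting from a hypothetical proper $3$-coloring of $Q_{n+2}^d$ a proper $3$-coloring $c'$ of $Q_n^d$ via a case-by-case rule, with base case $Q_2^d \cong K_4$; you instead argue directly, lifting the $\Z_3$-valued coloring differences to an integer potential $h$ (using the rigidity of four $\pm 1$'s summing to $0 \pmod{3}$ on each square face) and then running a discrete intermediate-value argument on $\Phi(x)=h(x+w_n)-h(x)$, whose evenness is where the hypothesis on $n$ enters. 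Notably, your argument proves exactly the antipodal restatement in the remark following Theorem \ref{theorem-Sokolova} --- in any $3$-coloring of an even-dimensional hypercube some antipodal pair is monochromatic --- and it does so by a discrete Borsuk--Ulam/intermediate-value mechanism, so it substantiates the topological connection that remark only speculates about; it also replaces Sokolov\'{a}'s unexplained case analysis with a transparent parity obstruction. The trade-off is length versus robustness: the induction is shorter and entirely elementary, while the potential-function argument is more conceptual and of a type that generalizes (height-function arguments for $3$-colorings). One step you should tighten: passing from vanishing on square faces to exactness of $(\delta_i)$ over $\Z$ requires that the $4$-cycles generate the cycle space of $Q_n$ \emph{integrally} (equivalently, that attaching $2$-cells along all squares yields a simply connected complex, or, concretely, that any closed walk reduces to the trivial one by backtrack cancellations and elementary square swaps, each preserving the sum of the lifted differences); generation of the mod-$2$ cycle space alone would not justify an integral lift, though the integral statement is standard and true, so this is a presentational gap rather than a mathematical one.
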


\begin{proof}First observe that $(x_1,\dots,x_n)\mapsto (x_1,x_2+\cdots+x_n)$ defines a group homomorphism from $\mathbb{Z}^n_2$ to $\mathbb{Z}^2_2$ mapping $\{e_1,\dots,e_n,w_n\}$ to $\{(1,0),(0,1),(1,1)\}$.  So this defines a graph homomorphism from $Q^d_n$ to $Q_2^d\cong K_4$, the complete graph on $4$ vertices.  Hence $\chi(Q_n^d)\leq 4$.

Next we show that $Q^d_n$ is not properly $3$-colorable.  We do so by induction.  For the base case ($n=2$), we saw previously that $Q^d_2\cong K_4$, which is not properly $3$-colorable.  Now assume that $Q^d_n$ is not properly $3$-colorable, and we will show that $Q^d_{n+2}$ is not properly $3$-colorable.  Suppose to the contrary that $c\colon \mathbb{Z}^{n+2}_2\to\mathbb{Z}_3$ is a proper $3$-coloring.  For two tuples $v=(v_1,\dots,v_j)$ and $u=(u_1,\dots,u_k)$, we define $v*u=(v_1,\dots,v_j,u_1,\dots,u_k)$.  Define $c'\colon\mathbb{Z}^{n}_2\to\mathbb{Z}_3$ by $c'(v)=k$ if $\{c(v*(0,0)),c((v+w_n)*(1,0))\}$ equals either $\{k\}$ or $\{k,k+1\}$.  A straightforward case-by-case analysis shows that $c'$ is a proper $3$-coloring of $Q^d_n$, which is a contradiction.\end{proof}

\begin{Rmk}We briefly digress to remark that Sokolov\'{a}'s theorem can be restated as follows.  In any (not necessarily proper) $3$-coloring of the vertices of an even-dimensional hypercube, there must exist two antipodal vertices, both of which are assigned the same color.  Stated this way, it brings to mind various topological theorems such as the hairy ball theorem and the Borsuk–Ulam theorem.   
We wonder whether there might be some connection between Sokolov\'{a}'s combinatorial result and one or more of these facts from topology, perhaps along the lines of the connection between Sperner's lemma and the Brouwer fixed point theorem.\hfill$\square$\end{Rmk}

Using Heuberger matrices, we will now see how Sokolov\'{a}'s theorem implies Payan's theorem.  The key idea is to show that every nonbipartite cube-like graph contains a homomorphic image of an even-dimensional cube-with-diagonals graph.

\begin{proof}[Proof of Theorem \ref{theorem-Payan}]Let $X=\text{Cay}(\mathbb{Z}_2^n,S)$ be a nonbipartite cube-like graph.  Because $2x=0$ for all $x\in S$, there is a Heuberger matrix $M_X$ associated to $X$ whose last $m$ columns are $2e_1,\dots,2e_m$, where $m=|S|$.  That is, $M_X$ has the form $(A\;\vert\;2I_m)$ for some integer matrix $A$.  Here $I_m$ is the $m\times m$ identity matrix.  Using column operations as in \cite[Lemma \ref{general-lemma-isomorphisms}]{Cervantes-Krebs-general}, we have that \[(A\;\vert\;2I_m)_X^{\text{SACG}}\cong (A'\;\vert\;2I_m)_X^{\text{SACG}},\] for some matrix $A'$ whose entries are all in $\{0,1\}$.  Because $X$ is nonbipartite, by \cite[Lemma \ref{general-lemma-bipartite}]{Cervantes-Krebs-general}, some column $y$ of $A'$ contains an odd number $z$ of nonzero entries.  Hence by \cite[Lemma \ref{general-lemma-homomorphisms}, parts (\ref{general-lemma-homomorphisms-append-columns}) and (\ref{general-lemma-homomorphisms-append-zero-row})]{Cervantes-Krebs-general}, we have homomorphisms\[(w_z^t\;\vert\;2I_z)_Y^{\text{SACG}}\xrightarrow[\tau_1]{\ocirc}(y\;2e_{i_1}\;\cdots \;2e_{i_z})^{\text{SACG}}\xrightarrow[\tau_2]{\ocirc}(A'\;\vert\;2I_m)_X^{\text{SACG}}\]where $i_1,\dots,i_z$ are the indices of the nonzero entries of $y$, and $w_z^t$ is a column vector of length $z$ with a $1$ in every entry.  For $\tau_1$, we insert zero rows as appropriate; for $\tau_2$ we append the requisite columns.  So $\chi(Y)\leq\chi(X)$ by \cite[Lemma \ref{general-lemma-pullback}]{Cervantes-Krebs-general}.  If $z=1$, then $X$ has loops and is not properly colorable.  So assume $z\geq 3$.  Observe that $Y\cong Q_{z-1}^d$.  An application of Theorem \ref{theorem-Sokolova} then completes the proof.\end{proof}

\bibliographystyle{amsplain}
\bibliography{references}



\end{document}